\documentclass{article}
\usepackage{graphicx} % Required for inserting images
\usepackage{amssymb}
\usepackage{amsmath}
\usepackage{amsthm}

\newtheorem{theorem}{Theorem}[section]
\newtheorem{lemma}[theorem]{Lemma}

\DeclareMathOperator{\Gr}{Gr}
\DeclareMathOperator{\PGL}{PGL}
\DeclareMathOperator{\GL}{GL}
\DeclareMathOperator{\Spec}{Spec}
\DeclareMathOperator{\stab}{stab}

\title{On the codimension 1 PGL(3) orbit closures in $\text{Gr}(3,6)$}
\author{Tanav Choudhary}
\date{June 2025}

\begin{document}

\maketitle

\begin{abstract}
The projective linear group $\PGL(3)$ naturally acts on the Grassmannian $\Gr(3, V_2)$ of $3$-dimensional subspaces of the vector space $V_2$ of homogeneous conics in 3 variables. It was proved by Abdallah, Emsalem and Iarrobino in 2021 that this action has a one-parameter family of orbits along with 14 special orbits. The codimension 1 orbits of this action consist of the entire one-parameter family of orbits, along with 2 of the 14 special orbits. In this paper, we calculate the classes of the codimension 1 orbit closures in the Chow ring of $\Gr(3, V_2)$. 
\end{abstract}

\section{Introduction and Set-up}

Let $k$ be an algebraically closed field of characteristic zero. For each $i$, let $V_i$ be the vector space of homogeneous polynomials of degree $i$ in 3 variables over $k$. In particular, $V_2$ is the 6-dimensional vector space spanned by the basis $\{x^2, y^2, z^2, xy, xz, yz\}$. Let $C_2$ be the projective space associated with the vector space $V_2$, so that $C_2$ can be identified with $\mathbb{P}^5$. Note that we have a natural action of the group $\GL(3)$ on $V_2$. This induces an action of the projective linear group $\PGL(3)$ on the Grassmannian $\Gr(3, V_2)$, which parametrizes the three-dimensional subspaces of $V_2$ (We call an element of $\Gr(3, V_2)$ a ``net of conics"). To be precise, given an element $\langle v_1, v_2, v_3 \rangle \in \Gr(3, V_2)$ and $T \in \PGL(3)$, fix $\sigma \in \GL(3)$ to be a representative element of $T$, and define $T \cdot \langle v_1, v_2, v_3 \rangle = \langle \sigma(v_1), \sigma(v_2), \sigma(v_3) \rangle$. Before we embark on our analysis, it is first imperative to classify the orbits of this action of $\PGL(3)$ on $\Gr(3, V_2)$. \\ \\ Note that the variety $\Gr(3, V_2)$ has dimension $3(6 - 3) = 9$, while $\PGL(3)$ has dimension $3^2 - 1 = 8$. Therefore, we expect to find a one-parameter family of orbits of the $\PGL(3)$ action on $\Gr(3, V_2)$. Now, in order to classify the orbits, we need to make a few elementary constructions. Let $S_4$ denote the cubic hypersurface of degenerate conics in $C_2$ i.e. $S_4 = \{[v] \in C_2: \exists b, l \in V_1 \text{ such that } v = l \cdot b\}$.  Given an element $W \in \Gr(3, V_2)$, let $P(W) \subset C_2$ denote the projective plane associated to $W$. Then, define $\Gamma(W) := P(W) \cap S_4$. Since $\Gamma(W)$ is the intersection of a projective plane with a cubic hypersurface, it follows that $\Gamma(W)$ is generically a cubic plane curve. Moreover, let $D_2$ denote the set of double lines in $C_2$ i.e. it is the set of singular conics whose equation is the square of a linear form. \\ \\ Before moving on, we must define the Jacobian net of a smooth cubic plane curve. Let $\phi$ be a smooth cubic plane curve in 3 variables (say $x, y$ and $z$). Let $\phi_x, \phi_y,$ and $\phi_z$ denote the partial derivatives of $\phi$ with respect to the 3 variables. Then, $\phi_x, \phi_y$ and $\phi_z$ are all homogeneous conics in 3 variables. Then, the Jacobian net $J\phi$ is defined to be the vector subspace of $V_2$ spanned by $\phi_x, \phi_y, $ and $\phi_z$. \\ \\ Then, the following 2 theorems from \cite{AbdallahEmsalemIarrobino2021} classify the orbits of the action of $\PGL(3)$ on $\Gr(3, V_2)$ by considering the cases where $\Gamma(W)$ is smooth and where $\Gamma(W)$ is singular. 
\begin{theorem}
For every $W \in \Gr(3, V_2)$ such that $\Gamma(W)$ is a smooth cubic, there exists a unique smooth plane cubic $\phi$ such that $W = J\phi$. Moreover, the $\PGL(3)$-orbit of $W$ is completely determined by the $j$-invariant of the corresponding cubic $\phi$.
\end{theorem}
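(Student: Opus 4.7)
The plan is to prove the statement in three stages: first establishing that the Jacobian map $\mathcal{J}\colon [\phi] \mapsto J\phi$ from smooth plane cubics modulo scalar to $\Gr(3, V_2)$ is well-defined and injective; second, showing its image contains every $W$ with $\Gamma(W)$ smooth; and third, using $\PGL(3)$-equivariance to convert the orbit classification into the classical $j$-invariant statement.

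Well-definedness is immediate: for smooth $\phi$, any linear relation $a\phi_x + b\phi_y + c\phi_z = 0$ combined with B\'ezout applied to two of the partials would produce a common zero of all three partials and hence a singular point of $\phi$, so $J\phi$ really does lie in $\Gr(3, V_2)$. For injectivity I would suppose $J\phi_1 = J\phi_2 = W$ and exploit Euler's identity $3\phi_i = x\phi_{i,x} + y\phi_{i,y} + z\phi_{i,z}$ in tandem with the symmetry of mixed partials. Writing $\phi_{2,x_i} = \sum_j M_{ij} \phi_{1,x_j}$ for some $M \in \GL(3)$, the integrability conditions $\partial_j \phi_{2,x_i} = \partial_i \phi_{2,x_j}$ become linear constraints involving the Hessian of $\phi_1$; for smooth $\phi_1$ these should force $M$ to be a scalar matrix, and Euler then yields $\phi_2 = \lambda \phi_1$.

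For surjectivity onto $U := \{W : \Gamma(W) \text{ smooth}\}$, the injective $\mathcal{J}$ has $9$-dimensional irreducible source and $9$-dimensional target $\Gr(3, V_2)$, so its image is constructible of dimension $9$ with dense closure. To upgrade density to actual containment of $U$, I would construct $\phi$ directly from the geometry of $\Gamma(W)$. The smooth cubic $\Gamma(W) \subset P(W) \cong \mathbb{P}^2$ parameterizes degenerate conics, and recording the factorization $v = l \cdot m$ of each degenerate conic yields a 2-to-1 \'etale cover $\widetilde{\Gamma}(W) \to \Gamma(W)$ together with two natural maps to $(\mathbb{P}^2)^\vee$; classical polarity theory should then produce a smooth cubic $\phi$ in the original $\mathbb{P}^2$ with $J\phi = W$. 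This reconstruction is the main obstacle, since one must verify both smoothness of the candidate $\phi$ and that its Jacobian net is exactly $W$.

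Finally, the $j$-invariant statement follows from the $\PGL(3)$-equivariance of $\mathcal{J}$, which is a short chain-rule computation: if $\sigma \in \GL(3)$ acts on polynomials by precomposition with $\sigma^{-1}$, then each $(\sigma \cdot \phi)_{x_i}$ is a linear combination of the $\sigma \cdot \phi_{x_j}$, so $J(\sigma \cdot \phi) = \sigma(J\phi)$. Hence $\PGL(3)$-orbits on $U$ correspond bijectively to projective equivalence classes of smooth plane cubics, which by the classical theory of elliptic curves are parameterized by the $j$-invariant.
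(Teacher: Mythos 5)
First, a remark on the comparison itself: the paper does not prove this statement. Theorem 1.1 is imported from \cite{AbdallahEmsalemIarrobino2021} (see the sentence introducing it and the later recollection of their Proposition 4.3), so there is no in-paper proof to measure your outline against; I am therefore evaluating it on its own terms.

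Your outline contains two genuine gaps. The first is in the injectivity step. Writing $\phi_{2,x_i} = \sum_j M_{ij}\,\phi_{1,x_j}$, the symmetry of mixed partials gives exactly the condition $MH = HM^{T}$, where $H$ is the Hessian matrix of linear forms of $\phi_1$; but it is false that smoothness of $\phi_1$ forces the solution space of this linear condition to be the scalar matrices. For the Fermat cubic $\phi_1 = X^3+Y^3+Z^3$ one has $H = 6\,\mathrm{diag}(X,Y,Z)$, and every diagonal $M$ satisfies $MH = HM^{T}$; concretely, $X^3+Y^3+Z^3$ and $X^3+2Y^3+Z^3$ are non-proportional smooth cubics with the same Jacobian net $\langle X^2, Y^2, Z^2\rangle$. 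The theorem survives only because $\Gamma(\langle X^2,Y^2,Z^2\rangle)$ is the singular cubic $abc=0$, so this $W$ is excluded by the hypothesis that $\Gamma(W)$ is smooth; any correct injectivity argument must actually use that hypothesis (equivalently, that $\phi_1$ is not projectively equivalent to the Fermat cubic, cf.\ the bullet list recalling Proposition 4.3 of \cite{AbdallahEmsalemIarrobino2021} in Section 3 of the paper), and yours does not --- the implication you assert is simply wrong as stated. The second gap is surjectivity: a constructible image with dense closure need not contain the locus $\{W : \Gamma(W) \text{ smooth}\}$, and the reconstruction of $\phi$ from the \'etale double cover of $\Gamma(W)$ via ``classical polarity theory'' is precisely the substantive content of the existence claim; you flag it yourself as the main obstacle and supply no argument for why the candidate $\phi$ is smooth or why its Jacobian net is exactly $W$. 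The well-definedness argument, the equivariance computation $J(\sigma\cdot\phi) = \sigma(J\phi)$, and the final reduction to the classical fact that smooth plane cubics are projectively equivalent if and only if they have the same $j$-invariant are all fine.
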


The above theorem gives us a one-parameter family of orbits corresponding to the case where $\Gamma(W)$ is smooth, as desired. Now, the theorem below classifies the orbits corresponding to the case where $\Gamma(W)$ is singular: 

\begin{theorem}
For every $W \in \Gr(3, V_2)$ such that $\Gamma(W)$ is not a smooth cubic, the $\PGL(3)$-orbit of $W$ is completely determined by the linear isomorphism class of the triple of projective algebraic sets \[ [D_2 \cap P(W), \Gamma(W) = S_4 \cap P(W), P(W)] \] viewed as schemes. 
\end{theorem}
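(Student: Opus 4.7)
The plan is to prove both implications of the ``if and only if.'' The forward direction is essentially tautological: if $W' = T \cdot W$ with $T \in \PGL(3)$, any $\GL(3)$-lift $\sigma$ of $T$ restricts to a vector-space isomorphism $W \to W'$ and induces a linear isomorphism $P(W) \to P(W')$. Because the $\PGL(3)$-action on $C_2$ preserves both the Veronese $D_2$ (double lines map to double lines) and the cubic hypersurface $S_4$ (products $l \cdot b$ map to products), this isomorphism carries the triple $[D_2 \cap P(W), \Gamma(W), P(W)]$ scheme-theoretically onto $[D_2 \cap P(W'), \Gamma(W'), P(W')]$.

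The substantive direction is sufficiency, and I would proceed by a finite case analysis on the scheme-theoretic isomorphism type of the singular cubic $\Gamma(W)$. A singular plane cubic falls into a short list of types (irreducible nodal, irreducible cuspidal, smooth conic plus a transverse or tangent line, triangle of lines, three concurrent lines, line plus double line, triple line, and similar), each refined further by the structure of the subscheme $D_2 \cap P(W) \subset \Gamma(W)$. For each combinatorial type I would fix a normal-form net $W_0 \subset V_2$, compute its triple explicitly, and then show that any other $W$ with a linearly isomorphic triple must satisfy $W = T \cdot W_0$ for some $T \in \PGL(3)$.

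The genuine obstacle (and the reason this is a theorem rather than an observation) is the lifting step: how does one promote an abstract projective isomorphism $P(W) \to P(W')$ of two $\mathbb{P}^2$'s sitting inside $C_2 = \mathbb{P}^5$ to an element of $\PGL(3)$ acting on $V_2$? The key lever is that the Veronese $D_2 \subset C_2$ is canonically a copy of $\mathbb{P}(V_1) \cong \mathbb{P}^2$, and its group of projective self-isomorphisms (viewed inside $C_2$) is precisely $\PGL(3)$. Thus whenever $D_2 \cap P(W)$ contains a configuration of points that is already projectively rigid (say four points in general linear position when viewed in $D_2 \cong \mathbb{P}^2$), the required $T \in \PGL(3)$ is uniquely forced by matching these points between $W$ and $W'$, and one need only check compatibility with $\Gamma(W)$. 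The harder strata are those in which $D_2 \cap P(W)$ is sparse, positive-dimensional, or nonreduced; there one must extract additional rigidity from the remaining components of $\Gamma(W)$ (the factorizations $l \cdot b$ recording non-double singular conics in $P(W)$) to pin down $T$, and verify that the residual ambiguity is absorbed by the $\PGL(3)$-stabilizer of the chosen normal form. This is where I would expect most of the computational work to concentrate.
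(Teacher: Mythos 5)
First, a point of reference: the paper does not prove this statement. It is Theorem 1.2 quoted verbatim from Abdallah--Emsalem--Iarrobino, and the author explicitly defers to Sections 4.0 and 4.2 of that paper ("we omit the argument here for the sake of brevity"). So there is no in-paper proof to compare against; your proposal can only be judged against the strategy of the cited source, which it does match in outline: classify the possible singular $\Gamma(W)$ using the classification of plane cubics, refine each type by the scheme structure of $D_2 \cap P(W)$, exhibit a normal form for each resulting triple, and show each normal-form class is a single $\PGL(3)$-orbit. Your forward direction is correct and genuinely tautological, and you correctly locate the real difficulty in the converse: promoting an abstract linear isomorphism $P(W) \to P(W')$ that matches the triples to an element of $\PGL(3)$ acting on all of $V_2$.

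That said, as written the proposal is a plan rather than a proof, and the gap is exactly where you say the work concentrates. The sentence "show that any other $W$ with a linearly isomorphic triple must satisfy $W = T \cdot W_0$" restates the theorem stratum by stratum without establishing it for any stratum; the entire content of the result lives in that finite but nontrivial case analysis (AEI need their full Section 4.2 to extract 14 orbits from it). Moreover, the rigidity mechanism you lean on for the "easy" case --- $D_2 \cap P(W)$ containing four points in general position, hence a projective frame of $D_2 \cong \mathbb{P}(V_1)$ --- essentially never occurs: $D_2$ is a surface in $\mathbb{P}^5$, so a $2$-plane $P(W)$ generically misses it entirely, and for the nets in question $D_2 \cap P(W)$ is typically empty, a single point, or (for very special nets such as $\langle x^2, xy, y^2\rangle$) a conic. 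Nearly every stratum therefore falls into what you call the "harder strata," where the transformation $T$ must be reconstructed from the factorizations $l\cdot b$ of the non-double degenerate conics in $P(W)$ and the singular points of $\Gamma(W)$, and where one must separately verify that the triple distinguishes inequivalent normal forms (injectivity of the invariant) as well as that each normal form gives one orbit. None of that is carried out, so the proposal identifies the right skeleton but supplies essentially none of the flesh.
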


We can use the well-known classification of cubics in a projective plane over an algebraically closed field to give a general classification of cases for $\Gamma$. This is done in Section 4.0 of \cite{AbdallahEmsalemIarrobino2021}. Then, the authors of \cite{AbdallahEmsalemIarrobino2021} use this classification along with Theorem 1.2 to classify the $\PGL(3)$-orbits corresponding to the case where $\Gamma(W)$ is not a smooth cubic. In particular, they show that this case gives rise to exactly 14 distinct orbits. Their argument can be found in Section 4.2 of \cite{AbdallahEmsalemIarrobino2021}; we omit the argument here for the sake of brevity. \\ \\ In this paper, we will compute the classes of the codimension 1 $\PGL(3)$-orbit closures in the Chow ring of the Grassmannian $\Gr(3, V_2)$. The analogous simpler problem for $\Gr(2, V_2)$ was completely resolved in \cite{Goel2023}. As shown in Table 1 in \cite{AbdallahEmsalemIarrobino2021}, every orbit in the one-parameter family of orbits corresponding to the case where $\Gamma(W)$ is smooth has codimension 1. Moreover, it also follows from Table 1 in \cite{AbdallahEmsalemIarrobino2021} that there are exactly two codimension 1 orbits corresponding to the case where $\Gamma(W)$ is not a smooth cubic curve. In Section 3 of this paper, we prove that every codimension 1 scheme-theoretic orbit closure has class $4\sigma_1$ in the Chow ring of the Grassmannian $\Gr(3, V_2)$. 

\begin{theorem}
Suppose that $O$ is a codimension 1 orbit of the $\PGL(3)$ action on $\Gr(3, V_2)$. Let $\overline{O}$ denote the scheme-theoretic closure of $O$. Then, we have $[\overline{O}] = 4\sigma_1$.
\end{theorem}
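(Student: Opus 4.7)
Since $\operatorname{Pic}(\Gr(3, V_2)) = \mathbb{Z}\cdot\sigma_1$, every divisor class has the form $d\,\sigma_1$, so the task reduces to computing the coefficient $d_O$ for each codimension one orbit $O$. The plan is to compute $d_O = [\overline{O}] \cdot [L]$, where $L \subset \Gr(3, V_2)$ is a general Schubert line $\{W : A \subset W \subset B\}$ attached to a flag $A \subset B \subset V_2$ of dimensions $2 \subset 4$; any such $L$ is a $\mathbb{P}^1$ with $\sigma_1 \cdot [L] = 1$, so $d_O$ equals the number of transverse intersections of $\overline{O}$ with a generic $L$. For an orbit $O$ in the one-parameter family of Theorem~1.1, $\overline{O}$ is (the closure of) a fibre of the rational map $J_1 : W \mapsto j(\phi_W)$, where $\phi_W$ is the unique smooth cubic with $W = J\phi_W$; hence $d_O = \deg(J_1|_L)$.

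The first step is to factor $J_1$ through a more tractable map. Writing a general $v = p_1\phi_x + p_2\phi_y + p_3\phi_z \in J\phi$ and computing its associated symmetric matrix identifies $\Gamma(J\phi)$ with the Hessian cubic $H(\phi) := \det \operatorname{Hess}(\phi)$. Hence the map $J_2 : W \mapsto j(\Gamma(W))$ factors as $J_2 = g \circ J_1$, where $g : \mathbb{P}^1 \to \mathbb{P}^1$ sends $j(\phi) \mapsto j(H(\phi))$. To compute $\deg g$, I would restrict to the Hesse pencil $\phi_\lambda = x^3 + y^3 + z^3 + 6\lambda xyz$: a short determinant calculation shows $H(\phi_\lambda) \propto \phi_{\mu(\lambda)}$ with $\mu(\lambda) = -(1+2\lambda^3)/(6\lambda^2)$, a degree-$3$ rational self-map of the pencil parameter. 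Since the Hesse cover $\lambda \mapsto j(\phi_\lambda)$ has degree $12$ and is equivariant under the Hesse group, the induced map $g$ on $j$-invariants has degree $3$.

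The second step is to compute $\deg(J_2|_L) = 12$ via an elliptic surface. Form the incidence surface $\Sigma := \{(W, [v]) : W \in L,\ [v] \in \Gamma(W)\}$ inside the $\mathbb{P}^2$-bundle $X_L := \{(W, [v]) : W \in L,\ [v] \in P(W)\}$ over $L$. The natural projection $X_L \to \mathbb{P}(B) \cong \mathbb{P}^3$ given by $(W, [v]) \mapsto [v]$ identifies $X_L$ with the blow-up of $\mathbb{P}^3$ along the line $\mathbb{P}(A)$; under this identification, $\Sigma$ is the strict transform of the cubic surface $S := S_4 \cap \mathbb{P}(B)$, which is smooth for general $B$ and meets $\mathbb{P}(A)$ transversely in three points. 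Hence $\Sigma$ is a cubic surface blown up at $3$ points, a rational elliptic surface with $\chi(\mathcal{O}_\Sigma) = 1$, and the standard formula $\deg(j\text{-map}) = 12\,\chi(\mathcal{O}_\Sigma)$ for such surfaces gives $\deg(J_2|_L) = 12$. Combining, $d_O = \deg(J_1|_L) = 12/3 = 4$ for every generic orbit.

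For the two remaining codimension one orbits (where $\Gamma(W)$ is singular), the plan is to invoke their explicit descriptions in Table~1 of \cite{AbdallahEmsalemIarrobino2021}: each $\overline{O}$ is the locus of $W$ for which the triple $(D_2 \cap P(W),\, \Gamma(W),\, P(W))$ has a specified isomorphism type, and I would count $\overline{O} \cap L$ by direct parametrisation of the Schubert line. The main obstacle is likely the elliptic surface calculation --- in particular, verifying non-isotriviality of $\Sigma \to L$ for general $L$ (so the degree is really $12$ rather than $0$) and confirming that the three exceptional curves in $\Sigma$ are honest sections of the fibration, so that the Noether-type formula applies cleanly. The special-orbit counts should be relatively routine once the classification is in hand, though scheme-theoretic multiplicities must be tracked carefully to match the set-theoretic count.
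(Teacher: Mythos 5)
Your overall strategy is sound and arrives at the correct numbers, but it contains one genuine error that happens to sit exactly where the interesting geometry lives. You assert that $S = S_4 \cap \mathbb{P}(B)$ is smooth for general $B$. It is not: the singular locus of the cubic hypersurface $S_4$ is the Veronese surface $D_2$ of double lines, which is $2$-dimensional of degree $4$ in $\mathbb{P}^5$, so a general $\mathbb{P}^3$ meets it in four points and $S$ is the four-nodal Cayley cubic, not a smooth cubic surface. Consequently your incidence surface $\Sigma$ has four $A_1$ singularities (a general line $\mathbb{P}(A)$ avoids them, so blowing up the three base points does not remove them), and you cannot directly quote the rational-elliptic-surface formula. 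The computation is repairable: pass to the minimal resolution, which is still rational with $\chi(\mathcal{O}) = 1$, hence Euler number $12$; the four fibers through the nodes become $I_2$ fibers (contributing $2$ each to $\deg j$) and the four fibers coming from tangent planes are $I_1$ (contributing $1$ each), so $\deg(J_2|_L) = 4\cdot 2 + 4\cdot 1 = 12$ as you claimed. But as written the step fails, and the error also undermines your plan for the two special orbits: those orbits are precisely the loci where $\Gamma(W)$ is a nodal cubic whose node does or does not lie on the Veronese surface, i.e., they correspond exactly to the $I_1$ and $I_2$ fibers you have suppressed by assuming $S$ smooth. You should also note that passing from $\deg(J_1|_L) = 4$ to $[\overline{O_b}]\cdot[L] = 4$ for \emph{every} $b$ requires ruling out ramification of $J_1|_L$ over the particular value $b$; the paper handles this uniformly by a rational-equivalence (monodromy) argument over the irreducible parameter space of orbits.

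Where your route is genuinely different, and worth comparing: the paper obtains the degree $12$ not from an Euler-characteristic formula but by an explicit enumerative count --- the set-theoretic fiber over $j = \infty$ meets the Schubert line in $4 + 4 = 8$ points (the four planes through the nodes of the Cayley cubic, plus $\deg X^* = 4$ tangent planes, computed by intersection theory on the blow-up of $\mathbb{P}^3$ at the four nodes), and then a local computation of pole orders shows the scheme-theoretic fiber is $2[\overline{O_1'}] + [\overline{O_2'}] = 12\sigma_1$. This immediately yields $[\overline{O_1'}] = [\overline{O_2'}] = 4\sigma_1$ for the two special orbits, which your proposal leaves as a routine-sounding but actually nontrivial parametrization. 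Your factorization $J_2 = g\circ J_1$ with $\deg g = 3$ via the Hessian is the same as the paper's Theorem 3.1, and your elliptic-surface viewpoint, once corrected for the four nodes, gives a clean conceptual reason for the number $12$ and automatically encodes the multiplicities $2$ and $1$ along $\overline{O_1'}$ and $\overline{O_2'}$ as Kodaira fiber types; that is an attractive alternative to the paper's dual-surface computation, but it must be carried out on the resolution of the Cayley cubic, not on a smooth cubic surface.
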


Finally, in section 4, we try to account for the potential non-reduced structure of the scheme-theoretic orbit closures and compute the classes of the codimension 1 set-theoretic orbit closures in the Chow ring of $\Gr(3, V_2)$. 

\section{Acknowledgements}

This work was funded by the Harvard College Research Program (HCRP). The author of this paper would like to thank Professor Joseph Harris for his invaluable guidance, as well as Professor Anand Patel and Dhruv Goel for many helpful suggestions. 

\section{Computing the Chow Ring Classes of the \\ Codimension 1 orbit closures}

Fix an element $W \in \Gr(3, V_2)$ such that $\Gamma(W)$ is smooth. By Theorem 1.1, there exists a unique smooth plane cubic $\phi_W$ such that $W = J\phi_W$. First, we will give an equation relating the $j$-invariant of $\phi_W$ with the $j$-invariant of $\Gamma(W)$. Note that the cubic curve $\Gamma(W)$ is isomorphic to the Hessian $H(\phi_W)$ of the smooth cubic $\phi_W$. Moreover, it follows from Proposition 2.2 in \cite{Mula2025} that \[ j(H(\phi_W)) = \frac{(6912 - j(\phi_W))^3}{27 (j(\phi_W))^2}. \] Thus, we get the following result 

\begin{theorem}
Fix $W \in \Gr(3, V_2)$ such that $\Gamma(W)$ is smooth. Let $\phi$ be the unique smooth plane cubic such that $W = J\phi$. Then, we have \[ j(\Gamma(W)) = \frac{(6912 - j(\phi))^3}{27 (j(\phi))^2}. \]
\end{theorem}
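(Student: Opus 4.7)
The plan is to factor this identity through the Hessian curve $H(\phi_W)$: first produce a projective isomorphism $\Gamma(W) \cong H(\phi_W)$, and then invoke the formula $j(H(\phi)) = (6912 - j(\phi))^3/(27 j(\phi)^2)$ of Proposition 2.2 in \cite{Mula2025}. Since the $j$-invariant is an isomorphism invariant of smooth plane cubic curves, these two ingredients combine immediately to give the stated identity.

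To carry out the first step, I would use Theorem 1.1 to write $W = J\phi_W$ and parametrize $P(W)$ by $\mathbb{P}^2$ via the linear isomorphism $[\alpha : \beta : \gamma] \mapsto [\alpha \phi_{W,x} + \beta \phi_{W,y} + \gamma \phi_{W,z}]$. The key computation is that, because $\phi_W$ is a cubic, each second partial derivative $\phi_{W, x_i x_j}$ is linear in $(x,y,z)$, so the symmetric $3 \times 3$ matrix representing the conic $\alpha \phi_{W,x} + \beta \phi_{W,y} + \gamma \phi_{W,z}$ is (up to a fixed nonzero scalar) exactly the Hessian matrix $\operatorname{Hess}(\phi_W)$ evaluated at the point $(\alpha,\beta,\gamma)$. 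Consequently, this conic is degenerate — i.e., lies in $S_4$ — if and only if $\det \operatorname{Hess}(\phi_W)(\alpha,\beta,\gamma) = 0$, which is precisely the defining equation of $H(\phi_W)$. This gives the scheme-theoretic identification $H(\phi_W) \cong \Gamma(W) = P(W) \cap S_4$.

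With that isomorphism in hand, the second step is a direct citation: smoothness of $\phi_W$ (and, by hypothesis, of $\Gamma(W) \cong H(\phi_W)$) ensures that Mula's formula applies and yields $j(\Gamma(W)) = j(H(\phi_W)) = (6912 - j(\phi_W))^3 / (27 j(\phi_W)^2)$. The only subtle point in the whole argument — and hence the main obstacle — is the scheme-theoretic comparison in step one: I must verify that the cubic form cutting out $S_4 \cap P(W)$, pulled back through the chosen linear isomorphism $\mathbb{P}^2 \to P(W)$, agrees with $\det \operatorname{Hess}(\phi_W)$ up to a nonzero scalar (and not some power or other multiple that would change the $j$-invariant). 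This reduces to the matrix identification sketched above, which is routine but should be spelled out carefully to guarantee that the comparison of $j$-invariants is legal.
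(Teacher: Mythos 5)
Your proposal is correct and follows essentially the same route as the paper: identify $\Gamma(W)$ with the Hessian curve $H(\phi_W)$ and then apply the formula from Proposition 2.2 of \cite{Mula2025}. In fact you supply more detail than the paper does, since the paper simply asserts the isomorphism $\Gamma(W) \cong H(\phi_W)$ while you correctly sketch why it holds (the symmetric matrix of $\alpha\phi_{W,x}+\beta\phi_{W,y}+\gamma\phi_{W,z}$ is, up to the harmless factor $1/2$, the Hessian matrix of $\phi_W$ evaluated at $(\alpha,\beta,\gamma)$).
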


Now, consider the rational map $p: \Gr(3, V_2) \dashrightarrow \mathbb{P}^1$ given by $p(W) = j(P(W) \cap S_4)$. Concretely, $p$ sends a vector subspace $W \in \Gr(3, V_2)$ to the $j$-invariant of the cubic plane curve obtained by intersecting the projective plane $P(W)$ with the cubic hypersurface of degenerate conics. Let $U$ be the domain of definition of the rational map $p$. Then, $U$ is the largest open set on which $p$ is regular. Let $p_U: U \to \mathbb{P}^1$ denote the corresponding regular map. 
\begin{lemma} 
The regular map $p_U: U \to \mathbb{P}^1$ is surjective. 
\end{lemma}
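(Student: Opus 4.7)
The plan is to describe the image of $p_U$ directly, using the formula of Theorem 2.1 to realize every finite value and then exhibiting an explicit net whose $\Gamma$ is an irreducible nodal cubic to realize $\infty$. Let $f(t) = (6912 - t)^3/(27 t^2)$, so that Theorem 2.1 reads $p(J\phi) = f(j(\phi))$ for every smooth plane cubic $\phi$ with $j(\phi) \neq 0$ (in which case the Hessian $H(\phi)$ is smooth, so $J\phi$ lies in $U$).

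To cover $\mathbb{A}^1 \subset \mathbb{P}^1$, I would fix an arbitrary $c \in \mathbb{A}^1$ and consider the equation $f(t) = c$, which rearranges to the cubic equation $(6912 - t)^3 - 27 c t^2 = 0$ in $t$. This polynomial has degree three in $t$ with constant term $6912^3 \neq 0$, so $t = 0$ is not a root; hence a root $t_0 \in \mathbb{A}^1 \setminus \{0\}$ exists. Choosing any smooth plane cubic $\phi$ with $j(\phi) = t_0$ then yields a net $W = J\phi \in U$ satisfying $p(W) = f(t_0) = c$.

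To produce a preimage of $\infty$, I would analyze the explicit net $W_0 = \langle x^2,\, y^2 + xz,\, z^2 + xy \rangle$. A short $3 \times 3$ determinant calculation shows that, in coordinates $(a : b : c)$ on $P(W_0) \cong \mathbb{P}^2$, the curve $\Gamma(W_0)$ is cut out by $4abc - b^3 - c^3 = 0$. A direct inspection of the partial derivatives identifies $(1 : 0 : 0)$ as the unique singular point of this cubic, with tangent cone $4bc$, a union of two distinct lines; hence $\Gamma(W_0)$ is an irreducible nodal cubic. Since the $j$-invariant on the space of plane cubics extends regularly to irreducible nodal cubics with value $\infty$, the net $W_0$ lies in $U$ and satisfies $p(W_0) = \infty$.

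The main obstacle is that Theorem 2.1 alone cannot realize $\infty$ as a value of $p$: the fiber $f^{-1}(\infty) = \{0, \infty\} \subset \mathbb{P}^1$ consists entirely of $t$-values for which the passage from $j(\phi) = t$ to $W = J\phi \in U$ breaks down (either $\phi$ is equianharmonic, so $H(\phi)$ is singular, or $\phi$ is not a smooth cubic at all). An independent construction such as $W_0$ is therefore unavoidable, and the substantive verification there lies in checking that $\Gamma(W_0)$ is of nodal rather than cuspidal type, so that the $j$-invariant is regular at $\Gamma(W_0)$ rather than indeterminate.
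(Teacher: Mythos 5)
Your proposal is correct and follows essentially the same route as the paper: finite values of $j$ are realized by solving the cubic $(6912-t)^3 - 27ct^2 = 0$ for a nonzero root and invoking the Hessian $j$-invariant formula, and $\infty$ is realized by exhibiting an explicit net whose discriminant curve is an irreducible nodal cubic (the paper uses $\langle xy,\, x^2+yz,\, y^2+xz\rangle$ in place of your $\langle x^2,\, y^2+xz,\, z^2+xy\rangle$, but the verification is identical in spirit). Your closing remark correctly identifies why the value $\infty$ requires a separate construction.
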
 
\begin{proof} 
Fix an element $j_0 \in \mathbb{P}^1$ with $j_0 \neq \infty$. Then, since we are working over an algebraically closed field of characteristic zero, there exists $j_1$ such that $j_0 (27j_1^2) = (6912 - j_1)^3 $. Note that $j_1 \neq 0$ since $j_0 \neq \infty$. Thus, let $\phi_0$ be a smooth plane cubic with $j$-invariant $j_1$. Since $j_1 \neq 0$, it follows that $\phi_0$ is not isomorphic to $X^3 + Y^3 + Z^3$. Then, it follows from Proposition 4.3 in \cite{AbdallahEmsalemIarrobino2021} that $\Gamma(J\phi_0) = P(J\phi_0) \cap S_4$ is a smooth cubic curve. Then, it follows from Theorem 3.1 that $j(\Gamma(J\phi)) = j_0$. Consequently, we have that $p(J\phi) = p_U(J \phi) = j_0$. Hence, every $j_0 \neq \infty$ is contained in the image of $p_U$. \\ \\ In order to show that $p_U$ is surjective, we still need to show that $j_0 = \infty$ is contained in the image of $p_U$. To this end, define $H_0 = \langle xy, x^2 + yz, y^2 + xz \rangle \in \Gr(3, V_2)$. Then, every conic in $H_0$ can be written in the form $axy + b(x^2 + yz) + c(y^2 + xz)$. The $3 \times 3$ symmetric matrix corresponding to the conic $axy + b(x^2 + yz) + c(y^2 + xz)$ is \[ a \begin{pmatrix}
0 & 1/2 & 0 \\
1/2 & 0 & 0 \\
0 & 0 & 0 
\end{pmatrix} + b \begin{pmatrix}
1 & 0 & 0 \\
0 & 0 & 1/2 \\
0 & 1/2 & 0 
\end{pmatrix} + c \begin{pmatrix}
0 & 0 & 1/2 \\
0 & 1 & 0 \\
1/2 & 0 & 0 
\end{pmatrix}   \]  \[ = \begin{pmatrix}
b & a/2 & c/2 \\
a/2 & c & b/2 \\
c/2 & b/2 & 0 
\end{pmatrix} .\]

Therefore, the conic $axy + b(x^2 + yz) + c(y^2 + xz)$ is singular if and only if \[ \det\begin{pmatrix}
b & a/2 & c/2 \\
a/2 & c & b/2 \\
c/2 & b/2 & 0 
\end{pmatrix} = - \frac{b^3}{4} - \frac{c^3}{4} + \frac{abc}{4} = 0 .\]

Equivalently, the conic $axy + b(x^2 + yz) + c(y^2 + xz)$ is singular if and only if $-b^3 - c^3 + abc = 0$. Therefore, $\Gamma(H_0)$ is isomorphic to the cubic plane curve $-b^3 -c^3 +abc = 0$. Note that this cubic is singular. In particular, this cubic has an ordinary double point (or a node) at the point $[1:0:0]$, and it has no other singularities. Thus, the $j$-invariant of this cubic curve is defined and is equal to $\infty$. In short, $j(\Gamma(H_0)) = \infty$. This implies that $p_U(H_0) = p(H_0) = \infty$. Therefore, $j_0 = \infty$ is contained in the image of $p_U$. This completes our argument that $p_U$ is surjective. \end{proof} 

Since $p_U$ is surjective, it is in particular dominant. Since $U$ is a non-empty open set of $\Gr(3, V_2)$, we have $\dim(U) = \dim(\Gr(3, V_2)) = 9$. Since $p_U$ is dominant, we have that a generic fiber of $p_U$ has codimension 1 in $U$, and that every fiber of $p_U$ has codimension at most 1 in $U$ [Hartshorne, Exercise 3.22]. Moreover, since the map $p_U$ is non-constant, none of the fibers of $p_U$ are 9-dimensional. Therefore, every fiber of $p_U$ has codimension 1 in $U$. This implies that the closure of $p_U^{-1}(j)$ in $\Gr(3, V_2)$ has codimension 1 in $\Gr(3, V_2)$ for all $j \in \mathbb{P}^1$. \\ \\ Let $A(\Gr(3, V_2))$ denote the Chow ring of $\Gr(3, V_2)$. Recall that $\overline{p_U^{-1}(j)}$ has codimension 1 in $\Gr(3, V_2)$ for all $j \in \mathbb{P}^1$. Therefore, for all $j \in \mathbb{P}^1$, there exists $a_j \in \mathbb{Z}_{\geq 0}$ such that $[\overline{p_U^{-1}(j)}] = a_j \sigma_1$. Then, in order to compute $a_j$, we can intersect both sides of the previous equation with the complementary Schubert cycle $\sigma_{3, 3, 2} \in A(\Gr(3, V_2))$. Note that $\sigma_{3,3,2}$ is the set of 3-planes contained in a given 4-plane $\Lambda_4 \subset V_2$ and that contain a given 2-plane $\Lambda_2 \subset V_2$. Recall that $C_2$ is the projective space associated to $V_2$. Then, by Kleiman's transversality theorem, it follows that $a_j$ is the answer to the following enumerative problem: Given a general projective 3-plane $\Lambda \subset C_2 \cong \mathbb{P}^5$ and a general line $L \subset C_2 \cong \mathbb{P}^5$, how many 2-planes $H \subset C_2 \cong \mathbb{P}^5$ containing $L$ and lying in $\Lambda$ intersect with $S_4$ (the cubic hypersurface of degenerate conics) to give a cubic plane curve with $j$-invariant equal to $j$. \\ \\ Now, we will compute $a_{\infty}$. \begin{theorem} 
We have $a_{\infty} = 8$.
\end{theorem}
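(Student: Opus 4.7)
My plan is to translate the enumerative problem into a count of singular fibers in a pencil of plane sections on the cubic surface $\mathcal{C} := \Lambda \cap S_4 \subset \Lambda \cong \mathbb{P}^3$, and to evaluate the count via a topological Euler characteristic calculation.

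The first step is to understand $\mathcal{C}$. The singular locus of $S_4$ is the Veronese surface $D_2 \subset \mathbb{P}^5$ of degree $4$, and a direct expansion of $\det M$ near a rank-$1$ matrix shows that $S_4$ has transverse $A_1$ singularities along $D_2$ (the quadratic part of the local defining equation reads $bc - f^2$). Consequently, for generic $\Lambda$, the surface $\mathcal{C}$ is smooth away from the $4$ points of $\Lambda \cap D_2$, each of which is an ordinary double point of $\mathcal{C}$; that is, $\mathcal{C}$ is a Cayley cubic, with topological Euler characteristic $\chi(\mathcal{C}) = 9 - 4 = 5$ (a smooth cubic surface has $\chi = 9$, and each node subtracts $1$).

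The pencil of planes in $\Lambda$ through a generic line $L$ defines a rational map $\mathcal{C} \dashrightarrow \mathbb{P}^1$ with indeterminacy at the $3$ points of $L \cap \mathcal{C}$ (which, for generic $L$, avoid the nodes of $\mathcal{C}$). Blowing up these $3$ base points yields a surface $Y$ with $\chi(Y) = 8$, and resolving the $4$ nodes of $Y$ (replacing each by a $(-2)$-curve) yields a smooth surface $X$ with $\chi(X) = 12$, fibered over $\mathbb{P}^1$. By Kleiman transversality, $a_\infty$ equals the number of fibers of $X \to \mathbb{P}^1$ that descend to nodal plane cubics in the corresponding plane $H_t$. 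There are two kinds of singular fibers I expect. (i) For each of the $4$ nodes $q_i$ of $\mathcal{C}$, the unique $H_{t_i}$ in the pencil through $q_i$ cuts $\mathcal{C}$ in a plane nodal cubic (one checks via the tangent cone at $q_i$ that, generically, the singularity is a node rather than worse); the corresponding fiber in $X$ is the strict transform, a smooth rational curve, meeting the $(-2)$-curve above $q_i$ in $2$ points, so $\chi = 2$. (ii) For each plane $H_t$ in the pencil tangent to the smooth locus of $\mathcal{C}$, the fiber is a nodal plane cubic with $\chi = 1$; denote the number of such $H_t$ by $n$. The Euler characteristic identity
\[ \chi(X) \;=\; \chi(\mathbb{P}^1)\,\chi(\text{generic fiber}) \;+\; \sum_{t}\bigl(\chi(F_t) - \chi(\text{generic fiber})\bigr) \]
then reads $12 = 0 + 4\cdot 2 + n\cdot 1$, giving $n = 4$. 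Both types of singular fiber are plane nodal cubics, so $a_\infty = 4 + 4 = 8$.

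The main technical obstacle I anticipate is verifying, for generic $L$ and $\Lambda$, that the only singular fibers are of types (i) and (ii) and that each is genuinely a plane nodal cubic rather than a cuspidal or reducible cubic. This should follow from standard Bertini-type arguments: the loci of $L$ for which $H_t$ is tangent to $\mathcal{C}$ at a parabolic point, simultaneously passes through a node and is tangent to the smooth part, or produces a reducible section, each has codimension $\geq 2$ in the parameter space of pairs $(L,\Lambda)$ and so can be avoided.
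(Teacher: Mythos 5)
Your proposal is correct, and the final count agrees with the paper's. The first half of your argument is essentially identical to the paper's: both identify $\Lambda \cap S_4$ as a Cayley cubic with four nodes (via the transverse $A_1$ structure of $S_4$ along the Veronese surface and $\deg D_2 = 4$), both split the singular plane sections into those through a node (four planes, one per node) and those tangent at a smooth point, and both invoke Kleiman transversality for the reduction. Where you genuinely diverge is in counting the tangent planes: the paper identifies this number with the degree of the dual surface $X^*$ and computes it as $\deg\bigl((3\lambda - 2\sum e_i)(2\lambda - \sum e_i)^2\bigr) = 4$ on the blow-up of $\mathbb{P}^3$ at the four nodes (also citing the classical fact that the dual of the Cayley cubic is the Steiner quartic), whereas you run a Lefschetz-pencil Euler characteristic computation $12 = 2\cdot 0 + 4\cdot 2 + n\cdot 1$ on the resolved total space of the pencil. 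Your bookkeeping checks out: $\chi(\mathcal{C}) = 5$, the three base points and four $(-2)$-curves give $\chi(X) = 12$, the fiber over a plane through a node is the normalized nodal cubic meeting the $(-2)$-curve in two points (so $\chi = 2$), and a simple tangent plane gives $\chi = 1$. The paper's route yields the projective invariant $\deg X^* = 4$ independently of any pencil and works over any algebraically closed field of characteristic zero; your route is arguably more self-contained (no Chow ring of the blow-up needed) and packages the tangency count and its multiplicity-one verification into a single topological identity, but it requires $k = \mathbb{C}$ (or an appeal to the Lefschetz principle or $\ell$-adic Euler characteristics) and leans on the genericity checks you flag at the end -- no bitangent or parabolic tangent planes, no reducible sections, no plane through two nodes -- all of which do hold for a general pencil since the corresponding loci in $\mathbb{P}^{3*}$ have codimension at least two.
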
 
\begin{proof}
Consider the closed codimension 1 subvariety $\overline{p_U^{-1}(\infty)}$. Recall that $p_U^{-1}(\infty)$ is the set of 3-planes $H \in \Gr(3, V_2)$ such that $P(H) \cap S_4$ is a cubic plane curve with $j$-invariant $\infty$. In particular, it follows that $p_U^{-1}(\infty)$ contains all 3-planes $H \in G(3, V_2)$ such that $P(H) \cap S_4$ is a nodal cubic. Moreover, it follows from Table 1 in \cite{AbdallahEmsalemIarrobino2021} that the locus in $\Gr(3, V_2)$ of 3-planes $H$ such that $P(H) \cap S_4$ has a singularity other than a node has codimension $\geq 2$. Therefore, $a_{\infty}$ is the answer to the following enumerative problem: Given a general projective 3-plane $\Lambda \subset C_2$ and a general line $L \subset C_2$, how many projective 2-planes containing $L$ and lying in $\Lambda$ intersect with $S_4$ to give a singular cubic. \\ \\ Note that the singular locus of the cubic hypersurface $S_4 \subset C_2 \cong \mathbb{P}^5$ of degenerate conics is exactly the Veronese surface $D_2$, which is the locus of double lines in the $\mathbb{P}^5$ of conics. Since $D_2 \cong \mathbb{P}^2$ and $\deg(D_2) = 4$, the intersection of $D_2$ with a general projective 3-plane consists of four distinct points. Therefore, the singular locus of the cubic hypersurface $S_4 \cap \Lambda \subset \Lambda \cong \mathbb{P}^3$ consists of exactly four points. Let $X := S_4 \cap \Lambda$. Observe that at each point $p$ in the singular locus of $X$, the projective tangent cone $\mathbb{T}C_p X$ is clearly a smooth quadric, so each point in the singular locus of $X$ is an ordinary double point. Therefore, $X = S_4 \cap \Lambda \subset \Lambda \cong \mathbb{P}^3$ is a cubic hypersurface with four ordinary double points that is smooth everywhere else. \\ \\ Note that if a projective 2-plane $H$ lying in $\Lambda$ intersects with $X$ (or $S_4$) to give a singular cubic, then either $H$ contains one of the four ordinary double points of $X$, or $H$ is tangent to $X$ at a smooth point of $X$. Note that given a general line $L \subset \Lambda \cong \mathbb{P}^3$ and one of the four ordinary double points $p \in X$, there is exactly one 2-plane that contains $L$ and $p$. Therefore, there are exactly four projective 2-planes $H$ containing $L$ and lying in $\Lambda$ that contain one of the four ordinary double points of $X$. \\ \\ Furthermore, observe that the number of projective 2-planes $H \subset \Lambda \cong \mathbb{P}^3$ containing a general line $L$ that are tangent to $X$ at a smooth point of $X$ is simply the degree of the dual hypersurface $X^* \subset \mathbb{P}^{3*}$. Hence, now we will compute the degree of the dual hypersurface $X^* \subset \mathbb{P}^{3*}$. Observe that $X$ is simply the Cayley cubic surface. It is a classical fact that the dual of the Cayley cubic surface is a Steiner quartic surface, as shown in Section 9.2 of \cite{Dolgachev}. In particular, it follows that the degree of the dual hypersurface $X^* \subset \mathbb{P}^{3*}$ is 4. Below, we present another modern proof of this result.  

\begin{lemma}
The degree of the dual hypersurface $X^* \subset \mathbb{P}^{3*}$ is 4. 
\end{lemma}

\begin{proof}
Let $B$ denote the blow-up of $\mathbb{P}^3$ at the four ordinary double points $p_1, p_2, p_3, p_4 \in X$. Now, let $A(B)$ denote the Chow ring of $B$. Let $A^k(B)$ denote the group of codimension $k$ cycles of $B$. Let $E_1, E_2, E_3,$ and $E_4$ be the exceptional divisors associated to the points $p_1, p_2, p_3$ and $p_4$ i.e. $E_i = \pi^{-1}(p_i)$ for all $1 \leq i \leq 4$, where $\pi: B \to \mathbb{P}^3$ is the morphism associated with the blow-up. Let $\Lambda_k$ be the preimage $\pi^{-1}(\Lambda_k')$ of a $(3 - k)$-plane $\Lambda_k' \cong \mathbb{P}^2 \subset \mathbb{P}^3$ that does not contain the points $p_i$. For each $1 \leq i \leq 4$, let $\Gamma_{k, i}$ be a $(3 - k)$-plane contained in the exceptional divisor $E_i \cong \mathbb{P}^2$. Then, it follows from the well-known characterization of the chow ring of a blow-up at finitely many points that $A^k(B) = \mathbb{Z} \langle [\Lambda_k], [\Gamma_{k, 1}], \dots, [\Gamma_{k, 4}]\rangle$, and the multiplication relations of the Chow ring are: $$\Lambda_k \cdot \Lambda_l = \Lambda_{k + l}$$ $$\Gamma_{k, i} \cdot \Gamma_{l, j} = 0 \text{ for all } j \neq i$$   $$\Lambda_k \cdot \Gamma_{l, i} = 0$$ $$\Gamma_{k, i} \cdot \Gamma_{l, i} = - \Gamma_{k + l, i}$$ Let $\lambda = [\Lambda_1]$ and $ e_i = [E_i]$ for all $1 \leq i \leq 4$. Then, we have that $A^1(B) = \mathbb{Z}\langle \lambda, e_1, \dots, e_4 \rangle $. Now, let $\tilde{X}$ denote the proper transform of $X$ in $B$ i.e. $\tilde{X}$ is the closure of the preimage $\pi^{-1}(X \backslash \{p_1, p_2, p_3, p_4\})$. Then, observe that $[\tilde{X}] \in A^1(X)$, which implies that \[[\tilde{X}] = d\lambda + m_1 e_1 + m_2 e_2 + m_3 e_3 + m_4 e_4\] for some $d, m_i \in \mathbb{Z}$. Now, fix $1 \leq i \leq 4$. Consider the inclusion map $j: E_i \to B$. It follows from the Chow ring relations above that $j^*(\lambda) = 0$, $j^*(e_i) = -l$, where $l$ is the class of a line in $E_i \cong \mathbb{P}^2$, and $j^*(e_l) = 0$ for $l \neq i$. Hence, it follows that \[ j^*([\tilde{X}]) = - m_i l \in A^1(E_i).\] Now, note that the projectivized tangent cone $\mathbb{T}C_{p_i}X$ is exactly the intersection of the proper transform $\tilde{X}$ with the exceptional divisor $E_i$. Moreover, since $p_i$ is an ordinary double point, we have that $\mathbb{T}C_{p_i}X$ is a smooth quadric i.e. it has degree 2. Therefore, $-m_i l = 2l$, which implies that $m_i = -2$. Since $i$ was chosen arbitrarily, it follows that $m_i = -2$ for all $1 \leq i \leq 4$. \\ \\ Now, consider the inclusion map $j_0: \Lambda_1 \to B$. Observe that $j_0^*(\lambda) = l$, where $l$ is the class of a line in $\Lambda_1 \cong \mathbb{P}^2$, and $j_0^*(e_i) = 0$ for all $i$. Thus, $j_0^{*}([\tilde{X}]) = dl \in A^1(\Lambda_1)$. Since the hypersurface $X$ has degree 3 in $\mathbb{P}^{3}$, it immediately follows that $d = 3$. Hence, we have shown that \[ [\tilde{X}] = 3\lambda - 2e_1 - 2e_2 - 2e_3 - 2e_4 .\] Let $F$ be the polynomial defining the hypersurface $X$ i.e. $X$ is the zero locus of the homogeneous polynomial $F(Z_0, \dots, Z_3)$. Now, consider the map $G: \mathbb{P}^3 - \{p_1, p_2, p_3, p_4\} \to \mathbb{P}^{3*} $ given by \[ G(p) = \left[\frac{\partial F}{\partial Z_0}(p), \dots, \frac{\partial F}{\partial Z_4}(p) \right] . \] In particular, $G$ sends a smooth point $p \in X$ to its tangent hyperplane $\mathbb{T}_p X$. It is easy to see that this map extends to a regular morphism $G_0: B \to \mathbb{P}^{3*}$. Let $h \in A^1(\mathbb{P}^{3*})$ be the class of a general hyperplane $H \subset \mathbb{P}^{3*}$. It is clear that $G_0^*(h) = c \lambda - e_1 - e_2 - e_3 - e_4$ for some $c \in \mathbb{Z}$. Moreover, since the partial derivatives of $F$ are all polynomials of degree $3 - 1 = 2$, it follows that $c = 2$. Therefore, \[ G_0^*(h) = 2\lambda - e_1 - e_2 - e_3 - e_4  .\] It follows from Bertini's theorem that $2$ general hypersurfaces given by the pullback of the hyperplane class $h$ intersect transversely in $B$. The degree of the dual hypersurface $X^*$ is thus given by \[ \deg([\tilde{X}] (G_0^*(h))^2) = \deg((2\lambda - e_1 - e_2 - e_3 - e_4)^2(3\lambda - 2e_1 - 2e_2 - 2e_3 - 2e_4)).\] By the relations of the Chow ring $A(B)$, the above simplifies to \[ \deg(12 \lambda^3 - 2 e_1^3 - 2e_2^3 - 2e_3^3 - 2e_4^3) = 12 - 2(4) = 4 . \] Thus, the degree of the dual hypersurface $X^* \subset \mathbb{P}^{3*}$ is 4, as desired. 
\end{proof}
Now, we return to the proof of Theorem 3.3. It follows from Lemma 2.4 that given a general projective 3-plane $\Lambda \subset C_2$ and a general line $L \subset C_2$, there are exactly four projective 2-planes containing $L$ and lying in $\Lambda$ that are tangent to $X = S_4 \cap \Lambda$ at a smooth point of $X$. Moreover, recall that there are exactly four projective 2-planes containing $L$ and lying in $\Lambda$ that contain one of the four ordinary double points of $X$. Therefore, there are $4 + 4 = 8$ projective 2-planes containing $L$ and lying in $\Lambda$ that intersect with $S_4$ to give a singular cubic. In particular, we have $a_{\infty} = 8$, as desired.  
\end{proof}
Let $O_1'$ denote the set of subspaces $W \in \Gr(3, V_2)$ such that $\Gamma(W)$ is a nodal cubic whose node lies on the Veronese surface. Let $O_2'$ denote the set of subspaces $W \in \Gr(3, V_2)$ such that $\Gamma(W)$ is a nodal cubic whose node does not lie on the Veronese surface. Now, recall from \cite{AbdallahEmsalemIarrobino2021} that $O_1'$ and $O_2'$ are the two codimension 1 orbits corresponding to the case where $\Gamma(W)$ is not a smooth cubic curve. Write $[\overline{O_1'}] = b_1 \sigma_1$ and $[\overline{O_2'}] = b_2 \sigma_1$, where $b_1, b_2 \in \mathbb{Z}_{> 0}$. It follows from Table 1 in \cite{AbdallahEmsalemIarrobino2021} that the locus of 3-planes $W \in \Gr(3, V_2)$ such that $\Gamma(W)$ has a singularity other than a node and $\Gamma(W)$ contains a point on the Veronese surface has codimension $\geq 2$. Therefore, intersecting with the Schubert cycle $\sigma_{3, 3, 2}$, we see that $b_1$ is the number of projective 2-planes $H \subset C_2$ containing a fixed general line $L \subset C_2$ and lying in a fixed general 3-plane $\Lambda$ such that $H \cap S_4$ is singular and contains a point on the Veronese surface. Then, it follows from our proof of Theorem 3.3 that $b_1 = 4$. Similarly, it follows from the proof of Theorem 3.3 (specifically Lemma 2.4) that $b_2 = 4$. Therefore, we get 

\begin{theorem}
We have $[\overline{O_1'}] = [\overline{O_2'}] = 4 \sigma_1$.
\end{theorem}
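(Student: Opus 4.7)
The plan is to reuse the intersection-theoretic framework from Theorem 3.3 almost verbatim. Since $A^1(\Gr(3, V_2)) = \mathbb{Z}\sigma_1$, I would immediately write $[\overline{O_i'}] = b_i \sigma_1$ with $b_i \in \mathbb{Z}_{\geq 0}$ and extract $b_i$ by intersecting against the complementary Schubert class $\sigma_{3,3,2}$. By Kleiman's transversality theorem applied to the $\PGL(3)$-invariant cycle $\overline{O_i'}$ against a generic translate of $\sigma_{3,3,2}$, $b_i$ equals the number of projective 2-planes $H$ contained in a general 3-plane $\Lambda \subset C_2$ and containing a general line $L \subset C_2$ such that $H \cap S_4$ is a cubic of the corresponding orbit type: for $b_1$, a nodal cubic whose node lies on the Veronese surface $D_2$, and for $b_2$, a nodal cubic whose node lies off $D_2$. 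Here I would cite Table 1 of \cite{AbdallahEmsalemIarrobino2021} to confirm that the locus of $W$ for which $\Gamma(W)$ has a worse singularity than a node is of codimension $\geq 2$, so deeper strata do not contribute to the enumerative count.

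Next I would transport the counts onto the Cayley cubic surface $X = S_4 \cap \Lambda \subset \Lambda \cong \mathbb{P}^3$, which has already been analyzed in the proof of Theorem 3.3. Recall that $X$ has exactly four ordinary double points $p_1,\ldots,p_4$, all lying on the Veronese surface because the singular locus of $S_4$ is exactly $D_2$, and $X$ is smooth elsewhere. A 2-plane $H \subset \Lambda$ cuts $X$ in a singular cubic precisely when either $H$ contains some $p_i$, in which case the node of $H \cap X$ lies on $D_2$, or $H$ is tangent to $X$ at a smooth point, in which case the node lies off $D_2$. This clean bifurcation is exactly the distinction between the orbits $O_1'$ and $O_2'$. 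Thus $b_1$ is the number of 2-planes in $\Lambda$ through $L$ and some $p_i$, which is $4$ (one plane per $p_i$, since a general line and a point span a unique plane), while $b_2$ is the number of 2-planes in $\Lambda$ through $L$ tangent to $X$ at a smooth point, which by Lemma 2.4 equals the degree of the dual surface $X^*$, also $4$.

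The main obstacle, beyond quoting Theorem 3.3 and Lemma 2.4, is ensuring that the geometric count reads off the scheme-theoretic multiplicity with coefficient $1$ at each of the eight planes. I would need to check that for generic $L$ and $\Lambda$: the four $L$-spans with the $p_i$ are distinct from the four tangent 2-planes coming from $X^*$; each tangent 2-plane meets $X$ in a genuinely nodal (not cuspidal) cubic, so that it really contributes to $\overline{O_2'}$; and no tangent 2-plane has a second tangency or passes through some $p_i$. Each of these is a codimension-one condition on the configuration $(L,\Lambda)$ that fails only on a proper closed subset, and Kleiman's theorem guarantees that our generic choice avoids them. Once this transversality bookkeeping is in place, the coefficients assemble immediately into $[\overline{O_1'}] = [\overline{O_2'}] = 4\sigma_1$.
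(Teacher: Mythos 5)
Your proposal follows the paper's argument essentially verbatim: intersect each orbit closure with the complementary Schubert cycle $\sigma_{3,3,2}$, invoke Kleiman transversality, and read off $b_1 = 4$ from the four planes through $L$ and a node of the Cayley cubic $X = S_4 \cap \Lambda$ (node on the Veronese surface) and $b_2 = 4$ from the degree of the dual surface $X^*$ computed in Lemma 3.4 (node off the Veronese surface). The extra transversality bookkeeping you flag at the end is a reasonable refinement but does not change the route.
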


Now, let us return to finding the classes of the orbit closures corresponding to smooth $\Gamma(W)$. Let $F_b$ denote the \textbf{scheme-theoretic fiber} of the map $p: \Gr(3, V_2) \dashrightarrow \mathbb{P}^1$ over $b \in \mathbb{P}^1$. Concretely, $F_b$ is the closed subscheme of $\Gr(3, V_2)$ given by the fiber product $$F_b = \overline{\Gamma_p} \times_{\mathbb{P}^1} \Spec(\kappa(b))$$ where $\overline{\Gamma_p}$ is the closure of the graph of the rational map $p: \Gr(3, V_2) \dashrightarrow \mathbb{P}^1$ in $\Gr(3, V_2) \times \mathbb{P}^1$ and $\kappa(b)$ is the residue field of the point $b \in \mathbb{P}^1$. Recall that the set-theoretic fiber $p_U^{-1}(\infty)$ is the set of planes $W \in \Gr(3, V_2)$ such that $\Gamma(W)$ is a nodal cubic. Moreover, since $\PGL(3)$ is an irreducible group and since $O_1'$ and $O_2'$ are orbits of the $\PGL(3)$-action on $\Gr(3, V_2)$, it follows that the orbit closures $\overline{O_1'}$ and $\overline{O_2'}$ are irreducible. In particular, we have that $\overline{O_1'}$ and $\overline{O_2'}$ are the codimension 1 irreducible components of the scheme-theoretic fiber $F_{\infty}$. Therefore, we may write $[F_{\infty}] = l_1 [\overline{O_1'}] + l_2 [\overline{O_2'}]$ for some $l_1, l_2 \in \mathbb{Z}_{> 0}$. \\ \\ Let $H$ be a general point of $O_1'$. Fix a general pencil of 3-planes $\{H_t\}_{t \in \mathbb{P}^1}$ such that $H_0 = H$. Let $\{C_t\}_{t \in \mathbb{P}^1}$ be the family of cubics given by $C_t = \Gamma(H_t)$. Then, $C_0$ is a nodal cubic whose node lies on the Veronese surface. Therefore, locally around the node of $C_0$, the family of cubics has equation $xy - t^2$. This implies that $p$ has a double pole along $\overline{O_1'}$. Hence, $l_1 = 2$. An analogous argument shows that $p$ has a simple pole along $\overline{O_2'}$, which implies that $l_2 = 1$. Therefore, we have \[ [F_{\infty}] = 2[\overline{O_1'}] + [\overline{O_2'}] = 12\sigma_1 \] by Theorem 3.5. Observe that the scheme-theoretic fibers of the map $p: \Gr(3, V_2) \to \mathbb{P}^1$ are all rationally equivalent to each other by definition. Therefore, it follows that $[F_b] = [F_{\infty}] = 12\sigma_1$ for all $b \in \mathbb{P}^1$. 
\begin{theorem}
We have $[F_b] = 12\sigma_1$ for all $b \in \mathbb{P}^1$. 
\end{theorem}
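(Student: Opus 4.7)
The plan is to compute $[F_\infty]$ explicitly and then invoke rational equivalence of scheme-theoretic fibers to deduce the class of $F_b$ for every $b \in \mathbb{P}^1$. The starting observation is that, by the classification from \cite{AbdallahEmsalemIarrobino2021}, the set-theoretic fiber $p_U^{-1}(\infty)$ is exactly the locus where $\Gamma(W)$ is a nodal cubic, and decomposes into the two codimension 1 orbits $O_1'$ and $O_2'$ together with loci of codimension at least $2$ that do not contribute to the fundamental class. Since $\PGL(3)$ is irreducible, both $\overline{O_1'}$ and $\overline{O_2'}$ are irreducible codimension 1 subvarieties, so I may write $[F_\infty] = l_1[\overline{O_1'}] + l_2[\overline{O_2'}]$ for some positive integers $l_1, l_2$, and the task reduces to determining these multiplicities.

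To compute $l_i$, I would carry out a local analysis of the rational map $p$ transverse to each orbit closure. Pick a general point $H$ of $O_i'$ and a general pencil $\{H_t\}_{t \in \mathbb{P}^1}$ through $H$; then the family of cubics $C_t = \Gamma(H_t)$ acquires a node at $t = 0$, and the order of the pole of $j(C_t)$ at $t = 0$ is precisely $l_i$. For $O_1'$, the node of $C_0$ lies on the Veronese surface $D_2$, which is the singular locus of $S_4$; the geometry of the cubic hypersurface there forces the family to take the local shape $xy - t^2$ near the node, producing a double pole and hence $l_1 = 2$. For $O_2'$, the node lies at a smooth point of $S_4$, so the family deforms transversely and has local model $xy - t$, giving $l_2 = 1$. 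Combining these multiplicities with Theorem 3.5 yields $[F_\infty] = 2 \cdot 4\sigma_1 + 1 \cdot 4\sigma_1 = 12\sigma_1$.

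To transfer this computation to an arbitrary fiber, I would work on the closure $\overline{\Gamma_p} \subset \Gr(3, V_2) \times \mathbb{P}^1$ of the graph of $p$, which carries a proper morphism $\pi_2 : \overline{\Gamma_p} \to \mathbb{P}^1$. Any two closed points of $\mathbb{P}^1$ are rationally equivalent as $0$-cycles, so their scheme-theoretic preimages $\pi_2^{-1}(b)$ and $\pi_2^{-1}(\infty)$ are rationally equivalent as cycles on $\overline{\Gamma_p}$. Pushing forward along the other projection $\pi_1 : \overline{\Gamma_p} \to \Gr(3, V_2)$, which is birational, gives $[F_b] = [F_\infty] = 12\sigma_1$ for every $b \in \mathbb{P}^1$.

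The step I expect to be the main obstacle is the local multiplicity computation. Pinning down the local model $xy - t^2$ along $O_1'$ (as opposed to $xy - t$) requires a careful identification of the normal geometry of $S_4$ at a general point of the Veronese surface and an argument that a generic pencil hits this singular locus to second order in a precise sense. Once the multiplicities are secured, both the decomposition step and the rational-equivalence-of-fibers step are essentially formal.
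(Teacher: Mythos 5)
Your proposal is correct and follows essentially the same route as the paper: decompose $[F_\infty] = l_1[\overline{O_1'}] + l_2[\overline{O_2'}]$, compute $l_1 = 2$ and $l_2 = 1$ via the local models $xy - t^2$ and $xy - t$ along a general pencil, combine with Theorem 3.5 to get $12\sigma_1$, and spread to all fibers by rational equivalence on the graph closure. The local multiplicity step you flag as the main obstacle is indeed the delicate point, and the paper likewise asserts it with only a brief justification.
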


Having computed the Chow ring classes of the scheme-theoretic fibers of the rational map $p$, we are now ready to compute the Chow ring classes of the orbit closures of the $\PGL(3)$ action on $\Gr(3, V_2)$ corresponding to the case where $\Gamma(W)$ is smooth. Recall from Proposition 4.3 in \cite{AbdallahEmsalemIarrobino2021} that: 

\begin{itemize}
    \item Given an element $W \in \Gr(3, V_2)$ such that $\Gamma(W)$ is a smooth cubic, there exists a unique smooth cubic plane curve $\phi$, not isomorphic to $X^3 + Y^3 + Z^3$, such that $W = J\phi$
    \item  Conversely, given a smooth cubic $\phi$ that is not isomorphic to $X^3 + Y^3 + Z^3$, we have that $\Gamma(J\phi)$ is a smooth cubic.
    \item The $\PGL(3)$-orbit of $W \in \Gr(3, V_2)$ is determined by the $j$-invariant of the corresponding smooth cubic $\phi$.  
\end{itemize}

Let $O_b$ denote the orbit consisting of $W \in \Gr(3, V_2)$ such that $\Gamma(W)$ is a smooth cubic and the corresponding smooth cubic $\phi$ has $j$-invariant $b$. Let $\overline{O_b}$ denote the \textbf{scheme-theoretic} orbit closure. Now, recall from Theorem 3.1 that given $W \in \Gr(3, V_2)$ such that $\Gamma(W)$ is smooth, we have \[ j(\Gamma(W)) = \frac{(6912 - j(\phi))^3}{27(j(\phi))^2}  .\] Observe that each of the orbits $O_b$ are irreducible (since $\PGL(3)$ is irreducible) and have codimension 1 \cite{AbdallahEmsalemIarrobino2021}. Therefore, the equation in Theorem 3.1 implies that for all $b \neq \infty$, \[ [F_b] = \sum_{i} m_i [\overline{O_{\alpha_i}}] \] for some $m_i \in \mathbb{Z}_{> 0}$, where the $\alpha_i$ are the roots of the polynomial $f_b(j) := (6912 - j)^3 - 27bj^2$. Observe that $m_i$ is simply the multiplicity of the root $\alpha_i$ in the polynomial $f_b(j)$. \begin{lemma} 
We have that $f_b(j)$ only has a repeated root for $b = 0$ and $b = 1728$.
\end{lemma}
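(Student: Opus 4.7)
The plan is to use the standard criterion that a polynomial has a repeated root if and only if it shares a common zero with its derivative, and to solve the resulting two-variable system explicitly. Since $f_b(j) = (6912 - j)^3 - 27bj^2$ is a cubic in $j$ with leading coefficient $-1$ and nonvanishing constant term $6912^3$, it is a genuine cubic for every $b$, so the discriminant approach applies uniformly.

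First I would compute
\[ f_b'(j) = -3(6912-j)^2 - 54bj, \]
and observe that $j = 0$ is never a root of $f_b$ since $f_b(0) = 6912^3 \neq 0$. Then I would split into two cases based on whether $6912 - j$ vanishes at the repeated root. In the case $j = 6912$, substitution into $f_b(j) = 0$ immediately forces $b = 0$, and one checks that $f_0(j) = (6912-j)^3$ indeed has $j = 6912$ as a triple root. In the case $j \neq 6912$ (and $j \neq 0$), I would solve $f_b'(j) = 0$ for $b$ to get $b = -(6912-j)^2/(18j)$, solve $f_b(j) = 0$ for $b$ to get $b = (6912-j)^3/(27j^2)$, and equate the two. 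After dividing by $(6912-j)^2$, this reduces to the linear equation $-27j = 18(6912-j)$, giving $j = -13824$ and then $b = 1728$.

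Finally I would conclude that the only values of $b$ for which $f_b(j)$ has a repeated root are $b = 0$ and $b = 1728$, matching the claim. There is no real obstacle here beyond bookkeeping; the only step that requires any care is making sure the case analysis on the vanishing of $6912 - j$ and $j$ is exhaustive, so that no spurious value of $b$ slips through.
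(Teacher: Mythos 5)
Your proposal is correct and follows essentially the same route as the paper: both use the common-zero-with-derivative criterion, solve $f_b'(j)=0$ for $b$, and substitute into $f_b(j)=0$ to find that the repeated root must be $6912$ (giving $b=0$) or $-13824$ (giving $b=1728$). Your explicit case split on whether $6912-j$ vanishes just makes visible the factor $(j-6912)^2$ that the paper absorbs into its resulting cubic equation, so the two arguments differ only in bookkeeping.
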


\begin{proof}
Fix $b$. Suppose that $f_b(j)$ has a repeated root. Then, $f_b(j) = (6912 - j)^3 - 27bj^2$ and $f_b'(j) = -3(6912 - j)^2 - 54bj$ have a common zero $x$. Since $f_b'(x) = 0$, we get \[ b = -\frac{(6912 - x)^2}{18x} .\] Substituting the above in $f_b(x) = 0$ yields \[ (6912 - x)^3 = -\frac{3}{2} (6912 - x)^2x\] which is equivalent to \[ (x - 6912)^2 (x + 13824) = 0 .\] Thus, $x = 6912$ or $x = -13824$. If $x = 6912$, then $b = 0$. If $x = -13824$, then \[ b = - \frac{(6912 + 13824)^2}{18(-13824)} = 1728 .\] Hence, we have either $b = 0$ or $b = 1728$, as desired. Moreover, it is easy to verify that if $b = 0$, then $f_b(j)$ has a triple root at $6912$, and if $b = 1728$, then $f_b(j)$ has a double root at $-13824$ and a simple root at $1728$. 
\end{proof}

In particular, Lemma 2.7 implies that $f_b(j)$ has three simple roots for $b \neq 0, 1728$. Thus, for all $b \neq 0, 1728$ we have \[ [F_b] = [\overline{O_{\alpha_1}}] + [\overline{O_{\alpha_2}}] + [\overline{O_{\alpha_3}}] \] where $\alpha_1, \alpha_2, $ and $\alpha_3$ are the three distinct roots of $f_b(j)$. Moreover, we also have \[ [F_0] = 3[\overline{O_{6912}}] \] and \[ [F_{1728}] = [\overline{O_{1728}}] + 2[\overline{O_{-13824}}] .\] Now, we are ready to prove the main result. \begin{theorem}
For all $b \in \mathbb{P}^1 \backslash \{0\}$, we have $[\overline{O_b}] = 4\sigma_1$.
\end{theorem}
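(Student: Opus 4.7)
The plan is to combine the linear relations on the integers $c_b := [\overline{O_b}] / \sigma_1$ obtained from Theorem 3.6 --- namely $3 c_{6912} = 12$, $2c_{-13824} + c_{1728} = 12$, and $c_{\alpha_1} + c_{\alpha_2} + c_{\alpha_3} = 12$ at generic $b$ --- with one additional ingredient: a divisibility constraint forcing $c_b$ to be a positive multiple of $4$ for every $b$. Given that, the first relation yields $c_{6912} = 4$ directly, while the other two relations each have the unique positive-multiple-of-$4$ solution in which every remaining $c_b$ also equals $4$.

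To extract the divisibility, I will restrict attention to the open locus $V \subset \Gr(3, V_2)$ on which $\Gamma(W)$ is smooth. By Theorem 1.1, the rule $W \mapsto j(\phi_W)$ defines a morphism $q : V \to \mathbb{A}^1 \setminus \{0\}$ whose set-theoretic fibers are the $8$-dimensional orbits $O_b$. Since $V$ is smooth of dimension $9$, the base is regular of dimension $1$, and all fibers are equidimensional of the expected dimension, miracle flatness shows $q$ is flat; combined with the smoothness of each homogeneous orbit $O_b$ (the action of $\PGL(3)$ has finite stabilizers), the scheme-theoretic fiber $q^{-1}(b)$ is the reduced orbit $O_b$. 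Since any two closed points of the rational base $\mathbb{A}^1 \setminus \{0\}$ are rationally equivalent, it follows that $[\overline{O_b}]\big|_V = [O_b]$ is a single class in $A^1(V)$ independent of $b$.

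Next I would compute $A^1(V)$ via the standard excision sequence. The complement $\Gr(3, V_2) \setminus V$ consists of all $W$ with $\Gamma(W)$ singular, and by the Abdallah--Emsalem--Iarrobino classification its codimension-$1$ components are exactly $\overline{O_1'}$ and $\overline{O_2'}$; by Theorem 3.5 both have class $4\sigma_1$. Hence $A^1(V) = \mathbb{Z}\sigma_1 / 4\mathbb{Z}\sigma_1 \cong \mathbb{Z}/4\mathbb{Z}$, and the constancy of $[\overline{O_b}] \big|_V$ translates into $c_b \equiv c_{6912} \equiv 0 \pmod{4}$ for every $b$. Since $\overline{O_b}$ is a nonempty effective divisor, $c_b \geq 1$, hence $c_b \geq 4$; substituting back into the linear relations above then forces $c_b = 4$ throughout.

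The main subtlety I anticipate is verifying that $q^{-1}(b)$ is literally the reduced orbit $O_b$, with no spurious multiplicity --- only then does the identification $q^*[b] = [O_b]$ hold without an extraneous factor. This reduces to showing $q$ is unramified (and in fact smooth) along each orbit, which I expect to follow from the combination of miracle flatness, smoothness of each homogeneous fiber, and the smoothness of $V$ itself.
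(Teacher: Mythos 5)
Your strategy is genuinely different from the paper's: you extract a divisibility constraint $4 \mid c_b$ from excision on the open locus $V$ where $\Gamma(W)$ is smooth, and combine it with positivity and the linear relations coming from $[F_b] = 12\sigma_1$. The excision computation $A^1(V) \cong \mathbb{Z}/4$ is correct, and the endgame (each relation has a unique solution in positive multiples of $4$) is also correct. However, there is a genuine gap in the step that produces the congruence for \emph{every} $b$: you assert that the scheme-theoretic fiber $q^{-1}(b)$ is the reduced orbit $O_b$, and claim this follows from miracle flatness together with smoothness of $V$ and of the set-theoretic fibers. That implication is false: the map $\mathbb{A}^2 \to \mathbb{A}^1$, $(x,y) \mapsto x^2$, is flat with smooth total space and smooth one-dimensional set-theoretic fibers, yet its scheme-theoretic fiber over $0$ is non-reduced. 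What generic smoothness actually gives in characteristic zero is reducedness of $q^{-1}(b)$ for all $b$ outside some finite set $S$; for $b \in S$ you only obtain $\mu_b c_b \equiv 0 \pmod 4$, where $\mu_b$ is the unknown multiplicity of the fiber along $O_b$.

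This matters because the orbits $O_{1728}$ and $O_{-13824}$ appear in only one fiber, via the single relation $2c_{-13824} + c_{1728} = 12$, which has many positive integer solutions (e.g.\ $(5,2)$ and $(3,6)$) besides $(4,4)$; without knowing that $1728$ or $-13824$ lies outside $S$, these two orbits are not pinned down. (For $b$ with $j(b) \notin \{0, 1728\}$ one can rescue the argument: the two companion roots of $f_{j(b)}$ are generically outside $S$, hence have $c = 4$, forcing $c_b = 12 - 4 - 4 = 4$; but this does not help for $1728$ and $-13824$, whose only companions are each other.) Closing the gap requires showing $dq$ is nonzero along these specific orbits, a computation you have not supplied. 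Note that the paper avoids all of this: since the orbits $O_b$ are parametrized by the irreducible curve $\mathbb{P}^1 \setminus \{0\}$, the classes $[\overline{O_b}]$ are all \emph{equal} in $A^1(\Gr(3, V_2))$, not merely congruent mod $4$ after restriction to $V$, and then $[F_q] = 3[\overline{O_{\alpha_1}}] = 12\sigma_1$ at a generic $q$ finishes immediately. A smaller issue: you should also justify that $W \mapsto j(\phi_W)$ is a morphism on $V$, e.g.\ via Zariski's main theorem applied to the bijective Jacobian map onto the normal variety $V$.
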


\begin{proof}
Consider the map $j: \mathbb{P}^1 - \{0\} \to \mathbb{P}^1$ given by $j(x) = \frac{(6912 - x)^3}{27x^2}$. Let $\Gamma_j$ denote the graph of the map $j$. Then, $\Gamma_j$ is birational to $\mathbb{P}^1 - \{0\}$ via the first projection map, and is thus irreducible. Since $\Gamma_j$ parametrizes the orbits $O_b$ and since it is irreducible, it follows that $[\overline{O_{b_1}}] = [\overline{O_{b_2}}]$ for all $b_1, b_2 \neq 0$. In particular, fixing $q \neq 0, 1728$, we get \[ 12\sigma_1 = [F_q] = [\overline{O_{\alpha_1}}] + [\overline{O_{\alpha_2}}] + [\overline{O_{\alpha_3}}] = 3 [\overline{O_{\alpha_1}}] \] which implies that $[\overline{O_{\alpha_1}}] = 4\sigma_1$. This in turn implies that $[\overline{O_b}] = 4\sigma_1$ for all $b \neq 0$, as desired.    
\end{proof}

Now, we will give an alternative proof of Theorem 3.8 for $b \neq 0, 6912, 1728, -13824$. 

\begin{proof}
In this proof, we assume that $k = \mathbb{C}$. Fix $b \neq 0, 6912, 1728, -13824$. Now, observe that the map $j: \mathbb{P}^1 - \{0, 6912, 1728, -13824, \infty\} \to \mathbb{P}^1 - \{0, 1728, \infty\}$ given by $j(x) = \frac{(6912 - x)^3}{27x^2}$ is a degree 3 covering map. Moreover, recall that $\mathbb{P}^1$ is the Riemann sphere, so $X := \mathbb{P}^1 - \{0, 6912, 1728, -13824, \infty\}$ is a connected space. Now, let $a = j(b)$ and let $j^{-1}(a) = \{b, b_1, b_2\}$. Then, since $X$ is connected, it follows that the monodromy action of $\pi_1(\mathbb{P}^1 - \{0, 1728, \infty\}, a)$ on $j^{-1}(a)$ is transitive. Therefore, the orbits $O_{b}, O_{b_1}, O_{b_2}$ also experience transitive monodromy, which implies that their cycle classes are homologous. Therefore, \[ 12\sigma_1 = [F_a] = [O_b] + [O_{b_1}] + [O_{b_2}] = 3[O_b] \] which implies that $[\overline{O_b}] = 4\sigma_1$, as desired.    
\end{proof}

Finally, observe that by combining Theorem 3.5 and Theorem 3.8, we immediately get Theorem 1.3, as desired. 

\section{Further work}

Note that Theorem 3.8 tells us that the \textbf{scheme-theoretic} orbit closure has class $4\sigma_1$ in the Chow ring of $\Gr(3, V_2)$. However, the scheme-theoretic orbit closure may be non-reduced (i.e. it may appear with multiplicity), and so we need to account for this in order to compute the Chow ring classes of the set-theoretic orbit closures. To this end, we will try to compute the stabilizers of elements $W \in \Gr(3, V_2)$ such that $\Gamma(W)$ is smooth. \\ \\ Fix $\lambda \neq -1, -\omega, -\omega^2, 0, 2, 2\omega, 2\omega^2$. Then, the cubic $\phi_{\lambda} =  X^3 + Y^3 + Z^3 + 3\lambda XYZ$ is a smooth cubic with $j$-invariant $j(\lambda)$\footnote{According to \cite{Frium2002}, we may take $$j(\lambda) = \frac{\lambda^3(\lambda^3 - 8)^3}{27(\lambda + 1)^3(\lambda + \omega)^3(\lambda + \omega^2)^3}$$ } that is not isomorphic to the cubic  $X^3 + Y^3 + Z^3 = 0$.  Consider the orbit $O_{j(\lambda)}$. Then, $J \phi_{\lambda} = \langle X^2 + \lambda YZ, Y^2 + \lambda XZ, Z^2 + \lambda XY \rangle \in O_{j(\lambda)}$. Let $W_{\lambda} = J\phi_{\lambda}$. Now, suppose that $\sigma' \in \stab(W_{\lambda})$. Let $\sigma \in \GL(3)$ be a representative element of $\sigma'$ with matrix representation 

\[ \begin{pmatrix}
a_{11} & a_{12} & a_{13} \\
a_{21} & a_{22} & a_{23} \\
a_{31} & a_{32} & a_{33} 
\end{pmatrix} . \]

Then, since $\sigma' \in \stab(W_{\lambda})$, it follows that $\sigma(X^2 + YZ), \sigma(Y^2 + ZX), \sigma(Z^2 + XY) \in W_{\lambda}$. In other words \begin{equation}
\begin{split}
  (a_{g(1)1}X &+ a_{g(1)2}Y + a_{g(1)3}Z) \\
  &\quad+ \lambda\bigl(a_{g(2)1}X + a_{g(2)2}Y + a_{g(2)3}Z\bigr)
            \bigl(a_{g(3)1}X + a_{g(3)2}Y + a_{g(3)3}Z\bigr)
  \;\in\;W_{\lambda}.
\end{split}
\end{equation} for all $g \in \langle (1 2 3)\rangle \subset S_3 $. Observe that given a homogeneous conic $$f(X, Y, Z) = f_1 X^2 + f_2YZ + f_3 Y^2 + f_4 ZX + f_5 Z^2 + f_6 XY$$ we have $f(X, Y, Z) \in W_{\lambda}$ if and only if $\lambda f_1 - f_2 = 0$, $\lambda f_3 - f_4 = 0$, and $\lambda f_5 - f_6 = 0$. Then, it follows from equation (1) above that \begin{equation}
\begin{aligned}
  2a_{g(1)h(2)}\,a_{g(1)h(3)}
  &\;+\;\lambda\bigl(a_{g(2)h(3)}\,a_{g(3)h(2)}
      \;+\;a_{g(2)h(2)}\,a_{g(3)h(3)}\bigr)\\
  &\;-\;\lambda\,a_{g(1)h(1)}^2
      \;-\;\lambda^2\,a_{g(2)h(1)}\,a_{g(3)h(1)}
  \;=\;0.
\end{aligned}
\end{equation} for all $g, h \in \langle (123) \rangle $. \\ \\ Let $A_g$ be the permutation matrix corresponding to the permutation $g \in S_3$, so $(A_g)_{ij} = 1$ if $j = g(i)$ and $(A_g)_{ij} = 0$ otherwise. Moreover, define $$B = \begin{pmatrix} 
1 & 0 & 0 \\ 
0 & \omega & 0 \\
0 & 0 & \omega^2

\end{pmatrix}$$ Now, fix $\lambda = 1$. Then, (2) gives us 9 degree 2 equations in 9 variables. Solving for the $a_{ij}$ using Mathematica yields: \[ \sigma' = [B^k A_{g}] \] for some $k \in \{0, 1, 2\}$ and $g \in S_3$ ($[B^k A_{g}]$ denotes the image of $B^k A_{g} \in \GL(3)$ under the quotient map $q: \GL(3) \to \PGL(3)$). Conversely, it is easy to verify that if $\sigma' = [B^k A_g]$ for some $k \in \{0, 1, 2\}$ and $g \in S_3$, then $\sigma' \in \stab(W_{\lambda})$. Thus we have $\stab(W_{\lambda}) = \{[B^k A_g]: k \in \{0,1, 2\}, g \in S_3\} $ when $\lambda = 1$. Analogous mathematica computations and arguments yield that $\stab(W_{\lambda}) = \{[B^k A_g]: k \in \{0,1, 2\}, g \in S_3\}$ for $\lambda = -7, -5, -3, 1, 3, 5, 7$. Let $g_1 = (12)$ and let $g_2 = (123)$. Then, let $a = [A_{g_2}], b = [B]$ and $c = [A_{g_1}]$. Then, we have \[ \{[B^k A_g]: k \in \{0,1, 2\}, g \in S_3\}\] \[ \cong \langle a, b, c | a^3 = b^3 = c^2 = 1, ab = ba, cbc = b^{-1}, cac = a^{-1}  \rangle \cong C_3 \rtimes_{\psi} S_3  \]  where the homomorphism $\psi: S_3 \to Aut(C_3) \cong C_2$ is given by the quotient map $S_3 \to S_3/C_3 \cong C_2$. Hence, we have that \[ \stab(W_{\lambda}) = \{[B^k A_g]: k \in \{0,1, 2\}, g \in S_3\} \cong C_3 \rtimes_{\psi} S_3 \] for $\lambda = -7, -5, -3, 1, 3, 5, 7$. This leads us to conjecture that \[ \stab(W_{\lambda}) = \{[B^k A_g]: k \in \{0,1, 2\}, g \in S_3\} \cong C_3 \rtimes_{\psi} S_3 \] is true for a general choice of $\lambda$. If this conjecture is true, then the set-theoretic closure of $O_{j(\lambda)}$ has class $4\sigma_1$ in the Chow ring of $\Gr(3, V_2)$ for all $\lambda$ such that $\stab(W_{\lambda}) = \{[B^k A_g]: k \in \{0,1, 2\}, g \in S_3\} \cong C_3 \rtimes_{\psi} S_3$. Moroever, the set-theoretic closure of $O_{j(\lambda)}$ would have class either $4\sigma_1, 2\sigma_1$ or $\sigma_1$ for all other values of $\lambda$.     

\section{Suggestions for further investigation}

This work opens several directions for further research. We list some of these directions below: 

\begin{itemize}
    \item The most natural next step of this paper would be to compute the Chow ring classes of the $\PGL(3)$ orbit closures that have codimension strictly greater than 1. By the work in \cite{AbdallahEmsalemIarrobino2021}, this corresponds to computing the Chow ring classes of the 12 remaining special orbits of the $\PGL(3)$ action on $\Gr(3, V_2)$.  
    \item One can also try to prove our conjecture in Section 4 that \[ \stab(W_{\lambda}) = \{[B^k A_g]: k \in \{0,1, 2\}, g \in S_3\} \cong C_3 \rtimes_{\psi} S_3 \] is true for a general choice of $\lambda$. Moreover, one could also try to determine the exceptional values of $\lambda$ (if any) for which the above statement does not hold.
    \item Observe that in this paper we consider the orbit closures of the $\PGL(3)$ action on $\Gr(3, V_2)$. However, one could instead compute the Chow ring classes of the orbit closures of the $\PGL(3)$ action on $\Gr(d, V_i)$ for other values of $d$ and $i$.  
\end{itemize}

\end{document}